\documentclass[11pt,oneside, a4paper]{amsart} 

\usepackage[english]{babel}
\usepackage[T1]{fontenc}
\usepackage{amsmath}
\usepackage{amssymb}
\usepackage{float}
\usepackage{amsfonts}
\usepackage{mathtools}
\usepackage[utf8]{inputenc}
\usepackage[mathcal]{eucal}
\usepackage{enumerate}
\usepackage{amsthm}
\usepackage{hyperref}
\usepackage[totalwidth=15cm,totalheight=21cm]{geometry}
\usepackage{epsfig,fancyhdr,color}
\usepackage{multicol} 


\newtheorem{cor}{Corollary}[section]

\newtheorem{teo}[cor]{Theorem}

\newtheorem{prop}[cor]{Proposition}

\newtheorem{lemma}[cor]{Lemma}

\theoremstyle{definition}

\theoremstyle{remark}

\newtheorem*{remark*}{Remark}

\newcommand{\Pp}{\mathbb{P}}
\newcommand{\R}{\mathbb{R}}

\newcommand{\C}{\mathbb{C}}

\newcommand{\h}{\mathbb{H}}

\newcommand{\diag}{\mathrm{diag}}

\newcommand{\dPSL}{\mathbb{P}SL(2,\mathbb{R})\times \mathbb{P}SL(2,\mathbb{R})}
\newcommand{\PSL}{\mathbb{P}SL}

\newcommand{\SO}{\mathrm{SO}}

\newcommand{\AdS}{\mathrm{AdS}}
\newcommand{\Imm}{\mathcal{I}\text{m}}
\newcommand{\Teich}{\mathpzc{T}}
\renewcommand{\Re}{\mathcal{R}\text{e}}

\DeclareMathAlphabet{\mathpzc}{OT1}{pzc}{m}{it}

\title[Degeneration GHM AdS structures]{Degeneration of globally hyperbolic maximal \\ anti-de Sitter structures along pinching sequences}

\author{Andrea Tamburelli}

\begin{document}

\begin{abstract}
Let $S$ be a closed oriented surface of genus at least $2$. Using the parameterisation of the deformation space of globally hyperbolic maximal anti-de Sitter structures on $S \times \R$ by the cotangent bundle over the Teichm\"uller space of $S$, we study the behaviour of these geometric structures along pinching sequences. We show, in particular, that the regular globally hyperbolic anti-de Sitter structures introduced in \cite{Tambu_regularAdS} naturally appear as limiting points.
\end{abstract}

\maketitle
\section*{Introduction}
A quickly growing area of reaserch studies geometric structures associated to surface groups representations into Lie groups, with the aim of understanding to which extent the well-known Teichm\"uller theory for representations into $\PSL(2,\R)$ can be generalised to higher rank Lie groups (\cite{Wienhard_ICM}). Globally Hyperbolic Maximal (GHM) anti-de Sitter structures on $S\times \R$ correspond in this context to pairs of faithful and discrete representations into $\PSL(2,\R)$. It turns out that these manifolds share many similarities with hyperbolic quasi-Fuchsian manifolds (\cite{Schlenker-Krasnov}, \cite{Mess}), and their study has led also to a better understanding of canonical maps between hyperbolic surfaces, for instance earthquakes (\cite{bsk_multiblack},\cite{BonSchlGAFA2009}) and minimal Lagrangian diffeomorphisms (\cite{bon_schl},\cite{seppimaximal}, \cite{Tambu_poly}). \\
When $S$ is closed, Krasnov and Schlenker (\cite{Schlenker-Krasnov}) found another parameterisation of the deformation space of GHMC anti-de Sitter structures by the cotangent bundle over the Teichm\"uller space of $S$: a point $(h,q)\in T^{*}\Teich(S)$ is associated to the GHM anti-de Sitter manifold containing an embedded space-like maximal surface (i.e. with vanishing mean curvature) with induced metric conformal to $h$ and second fundamental form given by $2\Re(q)$. This construction has been later generalised by the author to the case of surfaces with punctures $S_{\mathpzc{p}}=S\setminus \{p_{1}, \dots, p_{k}\}$, first by allowing second order pole singularities at the punctures (\cite{Tambu_regularAdS}) and then higher order poles (\cite{Tambu_wildAdS}). We remark that, unlike the closed case, the holonomy representation does not determine completely the structure, and new extra data describing the boundary curve at infinity of the maximal surface are necessary. \\

Now, holomorphic quadratic differentials with second order poles naturally appear as limits of holomorphic quadratic differentials over closed Riemann surfaces that are degenerating by pinching disjoint simple closed curves (\cite{Wolpert_spectrallimit}. Thus, one may wonder if regular globally hyperbolic anti-de Sitter structures are somehow limits of degenerating sequences of GHMC anti-de Sitter structures. We prove the following: \\

\noindent \textbf{Theorem A} \textit{Let $M_{n}$ be a sequence of GHMC anti-de Sitter manifolds parameterised by points $(h_{n},q_{n})\in T^{*}\Teich(S)$ such that $h_{n}$ diverges to a complete hyperbolic metric $h_{\infty}$ on the regular part of a noded Riemann surface $\Sigma^{reg}=\Sigma\setminus \{\text{nodes}\}$ and $q_{n}$ converges to a regular quadratic differential $q_{\infty}$ over $\Sigma^{reg}$. Then the sequence $M_{n}$ converges to the regular GHM anti-de Sitter structure parameterised by $(h_{\infty}, q_{\infty})$.}\\

The convergence in Theorem A has to be intended as follows. Let $\{\gamma_{j}\}_{j=1, \dots, k}$ be the simple closed curves that are getting pinched for the sequence $h_{n}$. Let $S^{reg}=S\setminus \cup_{j}\gamma_{j}$. For any connected component $S_{c}^{reg}$ of $S^{reg}$ and $\Sigma^{reg}_{c}$ of $\Sigma^{reg}$ choose a converging sequence of base points. We will prove that the equivariant maximal embeddings $\tilde{\sigma}_{n}$ with embedding data $(h_{n}, q_{n})$ restricted to the universal cover of $S^{reg}_{c}$ converge in the pointed topology to the restriction to the universal cover of $\Sigma_{c}^{reg}$ of the maximal embedding $\tilde{\sigma}_{\infty}$ with embedding data $(h_{\infty}, q_{\infty})$. From this we will deduce the convergence of the developing maps in each connected component and of the holonomy representations. Therefore, one should interpret each connected component of $M_{\infty}$ as pointed geometric limit of $M_{n}$ as the point is chosen in a fixed connected component of the regular part of the maximal surface, and, consequently,  $M_{\infty}$ as the collection of all possible such limits. \\

We remark that a similar picture holds in the context of convex real projective structures (\cite{Loftin_neck}), where now quadratic differentials are replaced by cubic differentials, and the role of the maximal surfaces is played by hyperbolic affine spheres in $\R^{3}$. It would be interesting to understand if this phenomenon is shared by other geometric structures.\\

The above result should also be interpreted as complementary to the study of degenerations of GHMC anti-de Sitter structures along rays of quadratic differentials carried out in \cite{entropy}. 

\subsection*{Outline of the paper} In Section \ref{sec:background} we recall well-known results about anti-de Sitter geometry, maximal globally hyperbolic manifolds and holomorphic quadratic differentials. Section \ref{sec:main} is then devoted to the proof of the main theorem.

\section{Background material}\label{sec:background}

We recall here some well-known facts about anti-de Sitter geometry and Wolpert's plumbing coordinates that will be used in the sequel. Throughout the paper, we will denote with $S$ a closed, connected, oriented surface of genus at least $2$ and with $\mathcal{T}(S)$ the Teichm\"uller space of $S$, which will be thought of, according to the context, as both the space of hyperbolic metrics on $S$ up to isometries isotopic to the identity and the space of complex structures on $S$ up to diffeomorphisms isotopic to the identity. The moduli space of Riemann surfaces homeomorphic to $S$ will be denoted by $\mathcal{M}(S)=\mathcal{T}(S)/\mathrm{MCG(S)}$.

\subsection{Anti-de Sitter geometry}\label{subsec:background} Consider the vector space $\R^{4}$ endowed with a bilinear form of signature $(2,2)$
\[
	\langle x,y\rangle= x_{0}y_{0}+x_{1}y_{1}-x_{2}y_{2}-x_{3}y_{3} \ .
\]
We define
\[
	\widehat{\AdS}_{3}=\{ x \in \R^{4} \ | \ \langle x , x \rangle=-1 \} \ .
\]
It can be easily verified that $\widehat{\AdS}_{3}$ is diffeomorphic to a solid torus and the restriction of the bilinear form to the tangent space at each point endows $\widehat{\AdS}_{3}$ with a Lorentzian metric of constant sectional curvature $-1$. Anti-de Sitter space is then 
\[
	\AdS_{3}=\Pp(\{x \in \R^{4} \ | \ \langle x,x\rangle < 0\})\subset \R\Pp^{3} \ .
\]
The natural map $\pi:\widehat{\AdS}_{3} \rightarrow \AdS_{3}$ is a two-sheeted covering and we endow $\AdS_{3}$ with the induced Lorentzian structure. The isometry group of $\widehat{\AdS_{3}}$ that preserves the orientation and the time-orientation is $\SO_{0}(2,2)$, the connected component of the identity of the group of linear transformations that preserve the bilinear form of signature $(2,2)$. \\

The boundary at infinity of anti-de Sitter space is naturally identified with 
\[
	\partial_{\infty}\AdS_{3}=\Pp(\{ x \in \R^{4} \ | \ \langle x,x\rangle=0\}) \ .
\]
It coincides with the image of the Segre embedding $s:\R\Pp^{1}\times \R\Pp^{1} \rightarrow \R\Pp^{3}$, and thus it is foliated by two families of projective lines. The action of an isometry extends continuously to the boundary, and preserves the two foliations. Moreover, it acts on each line by a projective transformation, thus giving an identification between $\Pp\SO_{0}(2,2)$ and $\dPSL$. \\

The Lorentzian metric on $\AdS_{3}$ induces on $\partial_{\infty}\AdS_{3}$ a conformally flat Lorentzian structure so that the light-cone at each point $p \in \partial_{\infty}\AdS_{3}$ is generated by the two lines in the foliation passing through $p$. 

\subsubsection{Complete maximal surfaces in $\AdS_{3}$} \label{sec:maxAdS} Let $U\subset \h^{2}$ be a simply connected domain. We say that $\sigma:U \rightarrow \AdS_{3}$ is a space-like embedding if $\sigma$ is an embedding and the induced metric $I=\sigma^{*}g_{AdS}$ is Riemannian. The Fundamental Theorem of surfaces embedded in anti-de Sitter space ensures that such a space-like embedding is uniquely determined, up to post-composition by a global isometry of $\AdS_{3}$, by its induced metric $I$ and its shape operator $B:\sigma_{*}TU \rightarrow \sigma_{*}TU$, which satisfy
\[
	\begin{cases} 
		d^{\nabla}B=0 \ \ \ \ \ \ \ \ \ \ \  \ \ \ \ \ \ \ \ \ \ \ \ \ \ \ \text{(Codazzi equation)} \\
		K_{I}=-1-\det(B) \ \ \ \ \ \ \ \ \ \ \ \ \ \text{(Gauss equation)}
	\end{cases}
\]
where $\nabla$ is the Levi-Civita connection and $K_{I}$ is the curvature of the induced metric on $\sigma(U)$. \\

We say that $\sigma$ is a maximal embedding if $B$ is traceless. In this case, the Codazzi equation implies that the second fundamental form $II=I(B\cdot, \cdot)$ is the real part of a quadratic differential $q$, which is holomorphic for the complex structure compatible with the induced metric $I$, in the following sense. For every pair of vector fields $X$ and $Y$ on $\sigma(U)$, we have
\[
	\Re(q)(X,Y)=I(BX,Y) \ .
\]
In a local conformal coordinate $z$, we can write $q=f(z)dz^{2}$ with $f$ holomorphic and $I=e^{2u}|dz|^{2}$. Thus, $\Re(q)$ is the bilinear form that in the frame $\{\partial_{x}, \partial_{y}\}$ is represented by 
\[
	\Re(q)=\begin{pmatrix}
			\Re(f) & -\Imm(f) \\
			-\Imm(f) & -\Re(f)
		\end{pmatrix} \ ,
\]
and the shape operator can be recovered as $B=I^{-1}\Re(q)$.\\

If the induced metric is complete, the space-like condition implies that, identifying $\widehat{AdS}_{3}$ with $D\times S^{1}$, the surface is the graph of a $2$-Lipschitz map (\cite[Proposition 3.1]{Tambu_poly}) and its boundary at infinity $\Gamma$ is a locally achronal topological circle in $\partial_{\infty}\AdS_{3}$ with the following property: if two points are causally related, then the light-like segment joining them is entirely contained in $\Gamma$ (\cite[Corollary 3.3]{Tambu_poly}).

\subsubsection{Globally hyperbolic anti-de Sitter structures} This paper deals with the moduli space of a special class of manifolds locally isometric to $\AdS_{3}$. \\

We say that an anti-de Sitter three-manifold $M$ is Globally Hyperbolic Maximal (GHM) if it contains an embedded, oriented, space-like surface $S$ that intersects every inextensible non-space-like curve in exactly one point, and if $M$ is maximal by isometric embeddings. It turns out that $M$ is necessarily diffeomorphic to a product $S\times \R$ (\cite{MR0270697}). Moreover, we say that $M$ is Cauchy Compact (C) if $S$ is closed of genus at least $2$. We denote with $\mathcal{GH}(S)$ the deformation space of GHMC anti-de Sitter structures on $S\times \R$. 

\begin{teo}[\cite{Schlenker-Krasnov}] The deformation space of GHMC anti-de Sitter structures is parameterised by the cotangent bundle of the Teichm\"uller space of $S$. 
\end{teo}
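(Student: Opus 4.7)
The plan is to construct a diffeomorphism $\Psi:\mathcal{GH}(S)\to T^{*}\Teich(S)$ built from the canonical maximal Cauchy surface of a GHMC anti-de Sitter manifold. For the forward map, I would invoke the existence and uniqueness of a smooth embedded maximal Cauchy surface $\Sigma\subset M$ in every GHMC AdS $3$-manifold $M$ (Barbot--B\'eguin--Zeghib). As recalled in Section \ref{sec:maxAdS}, the Codazzi equation together with the traceless condition forces the second fundamental form of $\Sigma$ to be the real part of a holomorphic quadratic differential $q$ for the complex structure compatible with the induced metric $I$; I then set $\Psi(M)=([I],q)$.

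Injectivity is immediate: if $\Psi(M_{1})=\Psi(M_{2})$, the Fundamental Theorem of surfaces in $\AdS_{3}$ provides an equivariant isometry between the two maximal Cauchy surfaces, and since every GHMC AdS manifold is recovered as the maximal Cauchy development of any of its Cauchy surfaces, $M_{1}$ and $M_{2}$ agree. Surjectivity is the main analytic step. Given $(h,q)\in T^{*}\Teich(S)$, fix a reference metric $h_{0}$ in the conformal class $h$ and look for the induced metric of a maximal surface in the form $I=e^{2u}h_{0}$. The Codazzi equation is automatic by holomorphicity of $q$, while the Gauss equation $K_{I}=-1-\det(I^{-1}\Re(q))$ becomes, after a routine computation in conformal coordinates, the quasilinear elliptic PDE
\[
\Delta_{h_{0}}u \;=\; e^{2u}-|q|^{2}_{h_{0}}\,e^{-2u}+K_{h_{0}}
\]
on the closed surface $S$, where $|q|^{2}_{h_{0}}$ is the globally defined pointwise norm of $q$ with respect to $h_{0}$. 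The right-hand side is strictly increasing in $u$ and diverges to $\pm\infty$ as $u\to\pm\infty$, so a large positive (resp.\ large negative) constant is a super-solution (resp.\ sub-solution); monotone iteration yields a smooth solution, which is unique by the maximum principle. The Fundamental Theorem then reconstructs an equivariant maximal embedding $\tilde{\sigma}:\tilde S\to\AdS_{3}$ with embedding data $(I,\Re(q))$, equivariant under a representation $\rho:\pi_{1}(S)\to\SO_{0}(2,2)$; since $I$ is automatically complete, the analysis of Section \ref{sec:maxAdS} yields an achronal topological-circle boundary $\Gamma\subset\partial_{\infty}\AdS_{3}$, and the quotient of the invisible domain of $\Gamma$ by $\rho$ is the desired GHMC AdS manifold.

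Smoothness of $\Psi^{-1}$ follows from the implicit function theorem applied to the Gauss PDE: the monotonicity above makes the linearised operator $v\mapsto\Delta_{h_{0}}v-(2e^{2u}+2|q|^{2}_{h_{0}}e^{-2u})v$ invertible on $C^{2,\alpha}(S)$, giving smooth dependence of $u$ on $(h_{0},q)$; smoothness of $\Psi$ itself reduces to smooth dependence of the maximal Cauchy surface on the ambient AdS structure. The main obstacle I expect is the surjectivity step: both the solvability of the Gauss PDE and the verification that the equivariant maximal surface produced from it is indeed the Cauchy surface of a genuine GHMC AdS manifold, rather than of some merely locally AdS spacetime. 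This is the single non-trivial analytic input of the theorem.
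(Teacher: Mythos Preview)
Your proposal is correct and follows essentially the same route the paper recalls from Krasnov--Schlenker: extract the unique maximal Cauchy surface, record its conformal class and holomorphic second fundamental form to define $\Psi$, and build the inverse by solving the Gauss equation as a quasilinear elliptic PDE with the Codazzi equation trivially satisfied. The only differences are cosmetic: the paper normalises the induced metric as $I=2e^{2v}h$ (producing the factor $\tfrac{1}{2}$ in Equation~(\ref{eq:PDE})) and does not spell out the smoothness argument via the implicit function theorem, whereas you do; one minor inaccuracy is your claim that the right-hand side diverges to $-\infty$ as $u\to-\infty$, which fails at zeros of $q$, but there it still tends to $K_{h_{0}}<0$ so your sub-solution argument survives.
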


Let us recall briefly how this homeomorphism is constructed. Let $M$ be a GHMC anti-de Sitter manifold. It is well-known that $M$ contains a unique embedded maximal surface $S$ (\cite{foliationCMC}). Lifting $S$ to $\AdS_{3}$, we obtain an equivariant maximal embedding of $\h^{2}$ into $\AdS_{3}$, which is completely determined (up to global isometries of $\AdS_{3}$) by its induced metric and a holomorphic quadratic differential. By equivariance, these define a Riemannian metric $I$ and a holomorphic quadratic differential $q$ on $S$. We can thus define a map
\begin{align*}
	\Psi: \mathcal{GH}(S) &\rightarrow T^{*}\Teich(S) \\
			M &\mapsto (h,q)
\end{align*}
associating to a GHMC anti-de Sitter structure the unique hyperbolic metric in the conformal class of $I$ and the holomorphic quadratic differential $q$. In order to prove that $\Psi$ is a homeomorphism, Krasnov and Schlenker (\cite{Schlenker-Krasnov}) found an explicit inverse. They showed that, given a hyperbolic metric $h$ and a quadratic differential $q$ that is holomorphic for the complex structure compatible with $h$, it is always possible to find a smooth map $v:S\rightarrow \R$ such that $I=2e^{2v}h$ and $B=I^{-1}\Re(2q)$ are the induced metric and the shape operator of a maximal surface embedded in a GHMC anti-de Sitter manifold. This is accomplished by noticing that the Codazzi equation for $B$ is trivially satisfied since $q$ is holomorphic, and thus it is sufficient to find $v$ so that the Gauss equation holds. Now,
\[
	\det(B)=\det(e^{-2v}(2h)^{-1}\Re(q))=e^{-4v}\det((2h)^{-1}\Re(2q))=-e^{-4v}\|q\|_{h}^{2}
\]
and
\[
	K_{I}=e^{-2v}(K_{2h}-\Delta_{2h}v)=\frac{1}{2}e^{-2v}(K_{h}-\Delta_{h}v)
\]
hence the Gauss equation translates into the quasi-linear PDE
\begin{equation}\label{eq:PDE}
	\frac{1}{2}\Delta_{h}v=e^{2v}-e^{-2v}\|q\|_{h}^{2}+\frac{1}{2}K_{h} \ .
\end{equation}

They proved existence and uniqueness of the solution to Equation (\ref{eq:PDE}) on closed surfaces and on surfaces with punctures, when $q$ has pole sigularities of order at most $1$ at the punctures. This construction has been later generalised by the author to the case of pole singularities of order at most $2$ obtaining the following:

\begin{teo}[\cite{Tambu_regularAdS}]\label{teo:AdS regular}Let $S_{\mathpzc{p}}=S\setminus\{p_{1}, \dots, p_{n}\}$ be a surface with punctures and negative Euler characteristic. Given a complete hyperbolic metric $h$ on $S_{\mathpzc{p}}$ and a meromorphic quadratic differential $q$ with poles of order at most $2$ at the punctures, there exists a unique complete equivariant maximal surface with induced metric conformal to $h$ and second fundamental form equal to $2\Re(q)$. 
\end{teo}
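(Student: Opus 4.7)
The argument hinges on solving, on the complete hyperbolic surface $(S_{\mathpzc{p}},h)$, the quasi-linear elliptic PDE~(\ref{eq:PDE}), which with $K_{h}\equiv -1$ reads
\begin{equation*}
\tfrac{1}{2}\Delta_{h}v=e^{2v}-e^{-2v}\|q\|_{h}^{2}-\tfrac{1}{2},
\end{equation*}
and then checking that the induced metric $I=2e^{2v}h$ is complete. The plan is to adapt the closed-surface argument of Krasnov--Schlenker via an exhaustion of $S_{\mathpzc{p}}$ by compact subsurfaces, combined with a sub- and super-solution scheme whose main difficulty lies at the second-order poles of $q$.

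First I would exhaust $S_{\mathpzc{p}}$ by an increasing family of compact subsurfaces $(\Omega_{n})$ with smooth boundary, obtained by truncating small cuspidal neighbourhoods of the punctures. The nonlinearity $F(v,x)=e^{2v}-e^{-2v}\|q\|_{h}^{2}(x)-\tfrac{1}{2}$ is strictly increasing in $v$, so once a sub-solution $v_{-}$ and a super-solution $v_{+}$ with $v_{-}\le v_{+}$ are available on all of $S_{\mathpzc{p}}$, the classical sub/super-solution method yields, for any Dirichlet datum sandwiched between $v_{-}|_{\partial\Omega_{n}}$ and $v_{+}|_{\partial\Omega_{n}}$, a smooth solution $v_{n}$ of the PDE on $\Omega_{n}$ satisfying $v_{-}\le v_{n}\le v_{+}$.

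The central technical step is therefore the construction of such global barriers. Away from the punctures, large positive respectively negative constants work, but near a second-order pole the norm $\|q\|_{h}^{2}$ blows up: in a standard cusp coordinate $z$ centred at a puncture one has $h\sim |dz|^{2}/(|z|\log|z|)^{2}$ and a direct computation gives $\|q\|_{h}^{2}\sim |c|^{2}(\log|z|)^{4}$, where $c$ is the leading coefficient of $q$. The natural model, suggested by the balancing of the two exponential terms in the PDE, is
\begin{equation*}
v_{0}:=\tfrac{1}{2}\log\|q\|_{h}.
\end{equation*}
A computation of $\Delta_{h}v_{0}$ in cusp coordinates shows that $v_{\pm}=v_{0}\pm\varphi$, for suitable bounded corrections $\varphi$ that absorb the constant term $-\tfrac{1}{2}$ and the error between $\|q\|_{h}$ and its leading part, provide a local super- and sub-solution respectively near each puncture. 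These local barriers are then patched to constant barriers on a compact core by exploiting the fact that pointwise minima of super-solutions are super-solutions, and dually for sub-solutions.

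With the barriers in place, uniform pointwise and hence, by interior Schauder estimates, uniform $C^{k}$ bounds on compact subsets of $S_{\mathpzc{p}}$ follow for the sequence $v_{n}$; a diagonal argument extracts a subsequential limit $v_{\infty}$ solving the PDE on all of $S_{\mathpzc{p}}$ and still squeezed between $v_{-}$ and $v_{+}$. Uniqueness is straightforward: if $v_{1},v_{2}$ are two solutions, the strict monotonicity of $F$ in $v$ implies that $w=v_{1}-v_{2}$ cannot attain a positive interior maximum; since both solutions are trapped between the same barriers one has $w\to 0$ at each puncture, and the ordinary maximum principle gives $w\equiv 0$. Finally, the asymptotic $v_{\infty}\sim v_{0}$ near each puncture translates into $I\sim 2|c|\,|dz|^{2}/|z|^{2}$, a complete cylindrical end, so $I$ is complete on $S_{\mathpzc{p}}$ and the Fundamental Theorem of surfaces in $\AdS_{3}$ produces the desired equivariant maximal embedding. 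The main obstacle is clearly the construction of the sub- and super-solutions near the second-order poles, which demands a careful analysis of $\Delta_{h}(\tfrac{1}{2}\log\|q\|_{h})$ in cusp coordinates and a gluing argument that preserves the sub/super-solution property across the transition region.
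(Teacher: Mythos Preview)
This theorem is not proved in the present paper; it is quoted from \cite{Tambu_regularAdS} as background, so there is no argument here against which to compare your proposal. Your outline is a reasonable strategy and is in fact close in spirit to what is done in the cited reference (exhaustion, sub/super-solutions built from $\tfrac12\log\|q\|_{h}$ near second-order poles, Schauder estimates, diagonal extraction). I will therefore only flag the one substantive gap.

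Your uniqueness argument is circular as written. You assert that if $v_{1},v_{2}$ are two solutions then ``both solutions are trapped between the same barriers'' and hence $w=v_{1}-v_{2}\to 0$ at each puncture. But the barriers $v_{\pm}$ were produced for the \emph{existence} step; an arbitrary solution $v$ for which $I=2e^{2v}h$ is complete has no a priori reason to lie between them. What actually has to be shown is that completeness of $I$ \emph{forces} the asymptotic $v\sim\tfrac12\log\|q\|_{h}$ at every second-order pole (and boundedness at lower-order poles or punctures where $q$ is holomorphic). This is a genuine step: it typically requires either an Omori--Yau type maximum principle on the complete noncompact surface, or a direct comparison argument exploiting that if $v$ were too small near a pole the metric $2e^{2v}h$ would fail to be complete. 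Only after such control on the ends is established can the interior maximum principle be applied to $w$. Without it, uniqueness among \emph{all} complete solutions---which is precisely the form invoked later in Lemma~\ref{lm:metrics}---is not proved.
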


Starting then from the equivariant maximal embedding into $\AdS_{3}$, it is possible to construct a maximal domain of discontinuity for the holonomy representation, thus obtaining the desired globally hyperbolic anti-de Sitter manifold as a quotient. The manifolds obtained from Theorem \ref{teo:AdS regular} are called \emph{regular} (in order to distinguish them from the wild analogues in \cite{Tambu_wildAdS}) and their deformation space $\mathcal{GH}^{reg}(S_{\mathpzc{p}})$ is thus parameterised by the bundle over Teichm\"uller space of $S_{\mathpzc{p}}$ of meromorphic quadratic differentials with poles of order at most $2$ at the punctures. \\

The aim of this paper is to explain how the two deformation spaces $\mathcal{GH}(S)$ and $\mathcal{GH}^{reg}(S_{\mathpzc{p}})$ interact with each other by showing that $\mathcal{GH}^{reg}(S_{\mathpzc{p}})$ naturally appears when the conformal structure of the maximal surface gets pinched. To this aim, we will make use of Wolpert's plumbing coordinates, that we recall in the next section. 

\subsection{Plumbing coordinates} We recall here well-known facts about the topology of the bundle $\mathcal{V}(S)$ of regular quadratic differentials over $\overline{\mathcal{M}(S)}$ using Wolpert's plumbing coordinates. A good reference for the material covered in this section is \cite{Wolpert_familyRS}.\\

Let $\Sigma$ be a noded Riemann surface with $n$ nodes. We will denote with $\Sigma^{reg}$ the (possibly disconnected) punctured Riemann surface obtained by removing the nodes. We think of $\Sigma$ as a point in the Deligne-Mumford compactification of the moduli space of complex structures on $S$. For each node there is a cusp neighbourhood $N_{i}$ so that:
\begin{itemize}
	\item $\overline{N}_{i} \cap \overline{N}_{j} = \emptyset$ if $i\neq j$;
	\item there are coordinates $z_{i}$ and $w_{i}$ on the two connected components of $N_{i}^{reg}=\Sigma^{reg} \cap N_{i}$ and a uniform constant $c<1$ so that
\[
	N_{i}^{reg}=\{ z_{i} \ | \ |z_{i}| \in (0,c) \} \cup \{ w_{i} \ | \ |w_{i}|\in (0,c)\}
\]
and the complete hyperbolic metric $h$ on $\Sigma^{reg}$ restricts to $N_{i}^{reg}$ as 
\[
	{h}_{|_{N_{i}^{reg}}}=\frac{|dx|^{2}}{(|x|\log|x|)^{2}} \ \ \ \ \ x=z_{i},w_{i} \ .
\]
\end{itemize}
Moreover, Wolpert (\cite{Wolpert_spectrallimit}) constructed a real analytic family of smooth Beltrami differentials $\nu(s)$ on $\Sigma^{reg}$ for $s$ in a neighbourhood of the origin of $\C^{3g-3-n}$ so that
\begin{itemize}
	\item $\nu(0)=0$;
	\item the support of each $\nu(s)$ is disjoint from the closure of each $N_{i}$;
	\item there is a Riemann surface $\Sigma^{s}$ and a diffeomorphism $\chi^{s}:\Sigma^{reg} \rightarrow \Sigma^{s,reg}$ satisfying $\overline{\partial}\chi^{s}=\nu(s)\partial\chi^{s}$;
	\item the restriction of $\chi^{s}$ on each $N_{i}$ is a rotation.
\end{itemize}
The plumbing construction produces from $\Sigma$ a new Riemann surface $\Sigma^{t}$ by opening the node. Recall that $N_{i}$ is biholomorphic to the set $\{ (z_{i}, w_{i}) \in \C^{2} \ | \ z_{i}w_{i}=0, \ |z_{i}|<c, \ |w_{i}|<c\}$. For each $t_{i}\in \C$ with $|t_{i}|<c^{2}$, the surface $\Sigma^{t}$, where $t=(t_{1}, \dots, t_{n})$, is obtained by replacing each $N_{i}$ with the annulus
\[
	N_{i}^{t_{i}}=\{(z_{i},w_{i}) \in \C^{2} \ | \  z_{i}w_{i}=t_{i}, \ |z_{i}|, |w_{i}| \in (|t_{i}|/c, c)\} \ .
\]
Since the Beltrami differentials are constructed so that the hyperbolic cusps are essentially preserved, the pair $(s,t)$ as above gives a coordinate system in a neighbourhood of $\Sigma \in \overline{\mathcal{M}(S)}$. On each of these Riemann surfaces $\Sigma^{s,t}$, Wolpert defined a grafting metric $g^{s,t}$. For our purposes, we only need to know that $g^{s,t}$ is a complete metric on $\Sigma^{s,t,reg}$ in the conformal class of the unique complete hyperbolic metric $h^{s,t}$ on $\Sigma^{s,t,reg}$ and coincides with it on $N_{i}$ if $t_{i}=0$. \\

Morover, we will also need a perturbation of the grafting metric described in (\cite{Loftin_neck}), and whose construction we sketch in the next paragraph. Since $\overline{\mathcal{M}(S)}$ is compact, we can find a finite number of coordinate charts $\{V^{\alpha}\}_{\alpha=0, \dots, N}$ so that $V^{0}$ entirely lies in the thick part of the moduli space for some $\epsilon>0$ and each other $V^{\alpha}$ is a plumbing coordinate neighbourhood of a noded Riemann surface. Let $\overline{\mathcal{T}(S)}$ be the augmented Teichm\"uller space and $\pi:\overline{\Teich(S)} \rightarrow \overline{\mathcal{M}(S)}$ be the canonical projection. For each (possibly noded) marked Riemann surface $\Sigma^{\alpha,s,t} \in \pi^{-1}(V^{\alpha})$ we can define a metric $m^{\alpha, s, t}$ with the following properties:
\begin{enumerate}
	\item if $\alpha=0$, $m^{0}$ is the complete marked hyperbolic metric;
	\item otherwise, we set $m^{\alpha, s,t}$ to be equal to the marked plumbing metric $g^{s,t}$ on the complement of $N_{i}$;
	\item if $t_{i}=0$, denote with $\ell=\log(x)$ for $x=z_{i}, w_{i}$ and define
\[
		m^{\alpha, s, t}_{|_{N_{i}}}=\begin{cases}
							\begin{tabular}{l l l}
						         $(2\log(c))^{-2}|d\ell|^{2}$ & \ \ \ \ \ \ \ \ & \text{if $\Re(\ell)\leq 2\log(c)$} \\
							$g^{s,t}$ & \ \ \ \ \ \ \ \ &\text{if $\Re(\ell)\geq \log(c)$}\\
							$fg^{s,t}$ & \ \ \ \ \ \ \ \ & \text{otherwise}
							\end{tabular}
							\end{cases}
\]
		where $f$ is a smooth interpolating function of $\Re(\ell)$;
	\item if $t_{i}\neq 0$, we define 
\[
		m^{\alpha, s, t}_{|_{N_{i}}}=\begin{cases}
							\begin{tabular}{l l l}
						         $(2\log(c))^{-2}|d\ell|^{2}$& \ \ \ \ \ & \text{if $\Re(\ell) \in [\log|t_{i}|-K,K]$}\vspace{+0.2cm} \\
							 $g^{s,t}$ & \ \ \  \ \ &\text{if $|t_{i}|\geq c^{2\pi}$ or $\Re(\ell)\geq K-\log(c)$}  \\ 
							\ & \ & \text{or $\Re(\ell)\leq \log|t_{i}|-K+\log(c)$} \vspace{+0.1cm} \\ 
							$\tilde{f}g^{s,t}$ & \ \ \ \ \ & \text{otherwise}
							\end{tabular}
							\end{cases}
\]
		where $\tilde{f}$ is an interpolating function of $\Re(\ell)$ and 
\[
	K=\frac{\log|t_{i}|}{\pi}\arcsin\left(\frac{2\pi\log(c)}{\log(|t_{i}|)}\right)   \ \ \ \  \ \text{with $0<|t_{i}|<c^{2\pi}$} \ . 
\]
\end{enumerate} 

A regular quadratic differential over a noded Riemann surface $\Sigma$ is given by a holomorphic quadratic differential $q$ on $\Sigma^{reg}$ with a precise behaviour at the node: we require that in the $z_{i}$ and $w_{i}$ coordinates, $q$ has a pole of order at least $2$ at the origin and the complex residue (i.e. the coefficient of order $-2$ in the Laurent expansion of $q$) match up. Let $\mathcal{V}(S)$ be the bundle of regular quadratic differentials over $\overline{\mathcal{T}(S)}$. We say that a sequence  $(h_{n},q_{n})$ converges to $(h_{\infty}, q_{\infty})$ in $\mathcal{V}(S)$ if and only if:  
\begin{enumerate}[i)]
	\item if $h_{\infty} \in \Teich(S)$, then $(h_{n}, q_{n})$ converges to $(h_{\infty}, q_{\infty})$ in $T^{*}\Teich(S)$;
	\item if $h_{\infty}$ is a complete marked hyperbolic metric on the regular part of a noded Riemann surface $\Sigma$, we have that
	\begin{enumerate}[a)]
		\item $h_{n}$ converges to $h_{\infty}$ uniformly on compact sets on each connected component of $S^{reg}$ and $\Sigma^{reg}$ (\cite{Mondello_augmented});
		\item $q_{n}$ converges to $q_{\infty}$ in $L^{\infty}$ with respect to the hyperbolic metrics $h_{n}$;
		\item on each $N_{i}$, if we write $q_{n}=\hat{q}_{n}dx^{2}$ for $x=z_{i},w_{i}$ and $q_{\infty}=\hat{q}_{\infty}dx^{2}$, we have that $\hat{q}_{n}$ converges to $\hat{q}_{\infty}$ normally for $|x| \in (|t_{i}|/c,c)$.
	\end{enumerate}
\end{enumerate}
We remark that, by a result of Loftin (\cite[Lemma 2.8.2]{Loftin_neck}) the convergence of the quadratic differentials is in $L^{\infty}$ also with respect to the metrics $m^{\alpha,n}$.

\section{Limit along pinching sequences}\label{sec:main}
We have now all the ingredients to prove the main result. Let $(h_{n}, q_{n}) \in T^{*}\Teich(S)$ be a sequence converging to $(h_{\infty},q_{\infty}) \in \mathcal{V}(S)$ as explained above. Let $\{\gamma_{j}\}_{j=1, \dots, k}$ be the family of simple closed curves that are getting pinched by the metrics $h_{n}$. We denote with $S^{reg}=S\setminus \cup_{j}\gamma_{j}$ the possibly disconnected surface obtained by removing the curves $\gamma_{j}$ and with $\Sigma^{reg}$ the regular part of the limiting noded Riemann surface. In each connected component $S^{reg}_{i}$ of $S^{reg}$ fix a base point $p_{i}$. Associated to these data, we have equivariant maximal embeddings $\tilde{\sigma}_{i,n}:(\widetilde{S^{reg}_{i}}, \tilde{h}_{n}) \rightarrow \AdS_{3}$ and $\tilde{\sigma}_{i,\infty}:(\widetilde{\Sigma^{reg}_{i}}, h_{\infty}) \rightarrow \AdS_{3}$. 

\begin{lemma}\label{lm:metrics} The sequence of induced metrics $I_{n}$ converges smoothly on compact sets of $S^{reg}$ to $I_{\infty}$. 
\end{lemma}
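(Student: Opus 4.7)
The induced metric is $I_n = 2 e^{2 v_n} h_n$, where $v_n : S \to \R$ is the unique smooth solution on the closed surface $S$ of the Gauss equation (\ref{eq:PDE}) for the data $(h_n, q_n)$, and likewise $I_\infty = 2 e^{2 v_\infty} h_\infty$ on $\Sigma^{reg}$. Since by Wolpert's plumbing construction $h_n \to h_\infty$ smoothly on compact subsets of $\Sigma^{reg}$ and, by hypothesis, $\|q_n\|_{h_n}^{2} \to \|q_\infty\|_{h_\infty}^{2}$ locally uniformly, it suffices to show that $v_n \to v_\infty$ in $C^{\infty}_{\mathrm{loc}}(\Sigma^{reg})$.

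The plan is first to establish uniform a priori bounds on $v_n$. For the lower bound I apply the minimum principle on the closed surface $S$: at a minimum of $v_n$ one has $\Delta_{h_n} v_n \geq 0$, and (\ref{eq:PDE}) together with $K_{h_n} = -1$ yields $e^{2 v_n} \geq \tfrac{1}{2} + e^{-2 v_n}\|q_n\|_{h_n}^{2} \geq \tfrac{1}{2}$, whence $v_n \geq -\tfrac{1}{2}\log 2$ on all of $S$. The upper bound is purely local. Given a compact $K \subset \Sigma^{reg}$, pick a relatively compact open neighborhood $U$ with $K \subset \overline{U} \subset \Sigma^{reg}$, on which $\|q_n\|_{h_n}^{2}$ is uniformly bounded and $h_n$ has uniformly bounded geometry. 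Combined with the lower bound, (\ref{eq:PDE}) gives $\Delta_{h_n} v_n \geq 2 e^{2 v_n} - C$ on $U$, and the exponential growth of the nonlinearity produces, by a Keller--Osserman type barrier argument, a universal bound $\sup_K v_n \leq C(K, U)$ independent of $n$.

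With uniform $L^{\infty}$ bounds on compact subsets, the nonlinearity in (\ref{eq:PDE}) is uniformly bounded, so interior Schauder estimates and the standard bootstrap provide uniform $C^{k,\alpha}_{\mathrm{loc}}$ bounds for every $k$. By Arzelà--Ascoli and a diagonal extraction, every subsequence of $v_n$ admits a further subsequence converging in $C^{\infty}_{\mathrm{loc}}(\Sigma^{reg})$ to a function $v_*$. Passing to the limit in (\ref{eq:PDE}), and using $h_n \to h_\infty$ and $q_n \to q_\infty$ locally smoothly, shows that $v_*$ solves the Gauss equation for $(h_\infty, q_\infty)$ on $\Sigma^{reg}$. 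The lower bound $v_n \geq -\tfrac{1}{2}\log 2$ persists in the limit, so $I_* = 2 e^{2 v_*} h_\infty \geq h_\infty$ pointwise; since $h_\infty$ is complete on $\Sigma^{reg}$, closed $I_*$-balls are closed and bounded in $h_\infty$, hence compact by Hopf--Rinow, so $I_*$ is complete. The uniqueness part of Theorem \ref{teo:AdS regular} then forces $v_* = v_\infty$. As every subsequential limit equals $v_\infty$, the whole sequence $v_n$ converges to $v_\infty$ in $C^{\infty}_{\mathrm{loc}}(\Sigma^{reg})$, and therefore $I_n \to I_\infty$ smoothly on compact sets of $S^{reg}$.

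The principal obstacle is the uniform upper bound on $v_n$: one cannot simply invoke the global maximum principle on $S$ because $\|q_n\|_{h_n}$ may well blow up along the collars of the pinching curves $\gamma_j$, and the global maximum of $v_n$ may itself escape to infinity. The Keller--Osserman mechanism bypasses boundary data entirely, relying only on the exponential growth of the right-hand side of (\ref{eq:PDE}) in $v$, and this is precisely what allows local control to be transferred from $(h_n, q_n)$ to the limit $(h_\infty, q_\infty)$.
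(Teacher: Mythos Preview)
Your argument is correct and follows the same global architecture as the paper's proof: a uniform lower bound via the constant sub-solution $-\tfrac{1}{2}\log 2$, a local upper bound on compact subsets of $S^{reg}$, interior elliptic estimates, subsequential convergence, completeness of the limit metric from the lower bound, and finally identification of the limit via the uniqueness statement in Theorem~\ref{teo:AdS regular}.

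The one genuine difference is how you obtain the upper bound. The paper constructs an explicit family of super-solutions $V_n=\phi_n+C$, where $\phi_n$ is the conformal factor between $h_n$ and Loftin's perturbed grafting metric $m^{\alpha,n}$; the point is that $\|q_n\|_{m_n}$ and $K_{m_n}$ are uniformly bounded (this is where the somewhat elaborate construction of $m^{\alpha,s,t}$ in Section~\ref{sec:background} is used), so a large enough constant $C$ makes $V_n$ a super-solution, and $\phi_n$ varies continuously in plumbing coordinates, giving uniform bounds on compacta. Your route instead exploits the inequality $\Delta_{h_n} v_n \geq 2e^{2v_n}-C$ on a fixed relatively compact $U\subset\Sigma^{reg}$ (using the lower bound on $v_n$ and the local uniform bound on $\|q_n\|_{h_n}$) together with a Keller--Osserman barrier to get an interior estimate independent of boundary data. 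This is more elementary and entirely self-contained---it avoids importing the grafting-metric machinery---while the paper's super-solution is more explicit and global in nature. Both achieve exactly what is needed: a uniform local upper bound that survives the possible blow-up of $\|q_n\|_{h_n}$ in the collars.
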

\begin{proof} To enlighten the notation, we remove the dependence on the connected component, but all the computation below is to be intended to hold on each connected component of $S^{reg}$. We can assume that for $n$ large enough, the sequence $(S,h_{n})$ is entirely contained in a chart $\pi^{-1}(V^{\alpha})$ for some $\alpha \neq 0$. In order to prove the lemma, we use the method of sub- and super-solutions. By the discussion of Section \ref{subsec:background}, we can write $I_{n}=2e^{v_{n}}h_{n}$, where $v_{n}$ is the solution to the PDE
\[
	\frac{1}{2}\Delta_{h_{n}}v_{n}=e^{2v_{n}}-e^{-2v_{n}}\|q_{n}\|_{h_{n}}^{2}+\frac{1}{2}K_{h_{n}}=F(v_{n}, q_{n}) \ .
\]
Noticing that $F(-\log(\sqrt{2}), q_{n})\leq 0$, we deduce that $-\log(\sqrt{2})$ is a sub-solution, hence $v_{n}\geq -\log(\sqrt{2})$. In order to construct a family of supersolutions $V_{n}$, we consider the conformal factors $\phi_{n}$ relating the metrics $m_{n}$ and $h_{n}$, so that $m_{n}=e^{2\phi_{n}}h_{n}$. We claim that there is a constant $C$ such that $V_{n}=\phi_{n}+C$ is a super-solution. Infact, 
\begin{align*}
	&\frac{1}{2}\Delta_{h_{n}}V_{n}-e^{2V_{n}}+e^{-2V_{n}}\|q_{n}\|_{h_{n}}^{2}-\frac{1}{2}K_{h_{n}} \\
	=&\frac{1}{2}\Delta_{h_{n}}\phi_{n}-e^{2\phi_{n}}e^{2C}+e^{-2\phi_{n}}e^{-2C}\|q_{n}\|_{h_{n}}^{2}-\frac{1}{2}K_{h_{n}} \\
	=&\frac{1}{2}(\Delta_{h_{n}}\phi_{n}-K_{h_{n}})-e^{2\phi_{n}}(e^{2C}-e^{-2C}\|q_{n}\|_{m_{n}}^{2})\\
	=&-e^{2\phi_{n}}(K_{m_{n}}+e^{2C}-e^{-2C}\|q_{n}\|_{m_{n}}^{2})
\end{align*}
is negative if $C$ is sufficently big, because by definition of the metrics $m_{n}$, the function $\phi_{n}$ has a uniform lower bound, the curvature of the metrics $m_{n}$ is uniformly bounded and by \cite[Lemma 2.8.2]{Loftin_neck}, the norms $\|q_{n}\|_{m_{n}}^{2}$ are uniformly bounded. Notice that $V_{n}$ is a smooth function on each connected component of $S^{reg}$ and we have uniform bound of compact sets because $\phi_{n}$ varies continously in Wolpert's plumbing coordinates, at least for $|t|$ sufficiently small. Thus the solution to Equation (\ref{eq:PDE}) satisfies $-\log(\sqrt{2}) \leq v_{n} \leq V_{n}$. The uniform bounds on $V_{n}$ and the convergence of the background metrics $h_{n}$ to $h_{\infty}$ on compact sets imply, together with interior elliptic estimates, that the sequence $v_{n}$ is locally uniformly bounded in $C^{2,\alpha}$. This implies that, up to sub-sequences, the sequence $v_{n}$ converges to $v_{\infty}$, which solves the differential equation
\[
	\frac{1}{2}\Delta_{h_{\infty}}v_{\infty}=e^{2v_{\infty}}-e^{-2v_{\infty}}\|q_{\infty}\|_{h_{\infty}}^{2}+\frac{1}{2}K_{h_{\infty}} \ .
\]
Since $v_{n}\geq -\log(\sqrt{2})$, also $v_{\infty}$ has the same lower-bound, and, in particular,
\[
	2e^{2v_{\infty}}h_{\infty} \geq h_{\infty} \ ,
\]
which ensures completeness of the limiting metric. By the uniqueness in Theorem \ref{teo:AdS regular}, we deduce that $I_{\infty}=2e^{2v_{\infty}}h_{\infty}$, and that every subsequence of $v_{n}$ converges to the same limit $v_{\infty}$, thus showing that $I_{n}$ converges to $I_{\infty}$ smoothly on compact sets.
\end{proof}

\begin{prop}\label{prop:embeddings}The sequence of embeddings $\tilde{\sigma}_{n,i}$ converges smoothly on compact set to $\tilde{\sigma}_{\infty,i}$ up to global isometries.
\end{prop}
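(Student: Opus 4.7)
My plan is to reduce the convergence of embeddings to the continuous dependence of solutions of a linear first-order PDE system on parameters, via a moving-frame formulation based on the Fundamental Theorem of surfaces in $\AdS_{3}$.

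By Section \ref{sec:maxAdS}, a spacelike embedding into $\AdS_{3}$ is determined, up to post-composition by an element of $\SO_{0}(2,2)$, by its induced metric and shape operator. Lemma \ref{lm:metrics} gives smooth convergence of $I_{n}$ to $I_{\infty}$ on compact subsets of $\widetilde{S^{reg}_{i}}$, and the assumption $q_{n}\to q_{\infty}$ in $\mathcal{V}(S)$ combined with $I_{n}\to I_{\infty}$ forces the shape operators $B_{n}=I_{n}^{-1}\Re(2q_{n})$ to converge smoothly on compact sets as well. I would therefore first deduce that the Christoffel symbols of $I_{n}$ and the matrix of $B_{n}$, in any fixed smooth local frame on $\widetilde{S^{reg}_{i}}$, converge smoothly on compact sets.

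Next I would build an adapted frame. Lifting everything to $\widehat{\AdS}_{3}\subset \R^{4}$, fix an oriented $I_{\infty}$-orthonormal basis $(e_{1},e_{2})$ of $T_{\tilde{p}_{i}}\widetilde{\Sigma^{reg}_{i}}$ and the future unit timelike normal $N_{\infty}$ to $\tilde{\sigma}_{\infty,i}$ at that point. Since $\SO_{0}(2,2)$ acts transitively on the bundle of positively oriented, time-oriented orthonormal frames of $\widehat{\AdS}_{3}$, I can post-compose each $\tilde{\sigma}_{n,i}$ with an isometry $\phi_{n}$ so that, at $\tilde{p}_{i}$, the image point, the pushforward of $(e_{1},e_{2})$ (made $I_{n}$-orthonormal by Gram--Schmidt applied to a fixed background frame), and the unit normal all agree with those of $\tilde{\sigma}_{\infty,i}$. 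The resulting moving frames $F_{n}=(\phi_{n}\circ\tilde{\sigma}_{n,i},\,(\phi_{n}\circ\tilde{\sigma}_{n,i})_{*}e_{1},\,(\phi_{n}\circ\tilde{\sigma}_{n,i})_{*}e_{2},\,N_{n})$, viewed as maps from $\widetilde{S^{reg}_{i}}$ to $\GL(4,\R)$, satisfy a linear Gauss--Weingarten system
\[
\partial_{\alpha}F_{n} = F_{n} \cdot A^{n}_{\alpha},
\]
integrable by Gauss--Codazzi, whose coefficients $A^{n}_{\alpha}$ depend smoothly on $(I_{n},B_{n})$ and hence converge smoothly on compact sets to $A^{\infty}_{\alpha}$. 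Applying the continuous dependence of solutions of linear ODE systems on parameters, integrated along paths emanating from $\tilde{p}_{i}$ (with global well-definedness ensured by simple-connectedness of $\widetilde{S^{reg}_{i}}$), I would conclude that $F_{n}\to F_{\infty}$ smoothly on compact subsets of $\widetilde{S^{reg}_{i}}$, and in particular $\phi_{n}\circ\tilde{\sigma}_{n,i}\to\tilde{\sigma}_{\infty,i}$.

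The main delicate point is the identification of the limiting solution $F_{\infty}$ with the embedding $\tilde{\sigma}_{\infty,i}$ produced by Theorem \ref{teo:AdS regular}: a priori, $F_{\infty}$ only solves the Gauss--Weingarten system with the limit data $(I_{\infty},B_{\infty})$, and I must invoke the uniqueness clause of Theorem \ref{teo:AdS regular}, together with the completeness of $I_{\infty}=2e^{2v_{\infty}}h_{\infty}$ already established at the end of the proof of Lemma \ref{lm:metrics}, to conclude that the two coincide and that no passage to subsequences is actually necessary.
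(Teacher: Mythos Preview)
Your proposal is correct and follows essentially the same route as the paper: cite Lemma~\ref{lm:metrics} for $I_{n}\to I_{\infty}$, use the hypothesis $q_{n}\to q_{\infty}$ in $\mathcal{V}(S)$ for $II_{n}=2\Re(q_{n})\to II_{\infty}$, and appeal to the Fundamental Theorem of surfaces in $\AdS_{3}$ to conclude convergence of the embeddings up to a global isometry. The only difference is that the paper treats the continuous-dependence content of the Fundamental Theorem as a black box, whereas you unpack it via the moving-frame ODE; one small quibble is that the identification of your limiting frame $F_{\infty}$ with the frame of $\tilde{\sigma}_{\infty,i}$ in your last paragraph is really just uniqueness of ODE solutions with matched initial data (i.e., the Fundamental Theorem itself), not the uniqueness clause of Theorem~\ref{teo:AdS regular}, which was already consumed inside the proof of Lemma~\ref{lm:metrics}.
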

\begin{proof} By the fundamental theorem of surfaces embedded in anti-de Sitter space, it is sufficient to prove that the sequences of induced metrics $I_{n}$ and second fundamental forms $II_{n}$ converge smoothly on compact set to $I_{\infty}$ and $II_{\infty}$. This follows from the previous lemma and the fact that 
\[
	II_{n}=2\Re(q_{n}) \to \Re(q_{\infty})=II_{\infty}
\] 
smoothly on compact sets by definition of the topology in $\mathcal{V}(S)$ (see condition ii) point b) at the end of the previous section).
\end{proof}

The embeddings $\tilde{\sigma}_{n,i}$ come together with representations $\rho_{n,i}:\pi_{1}(S^{reg}_{i}, p_{i}) \rightarrow \SO_{0}(2,2)$ so that 
\[
	\tilde{\sigma}_{i,n}(\alpha\cdot \tilde{p})=\rho_{n}(\alpha)\tilde{\sigma}_{i,n}(\tilde{p})   \ \ \ \ \  \forall \alpha \in\pi_{1}(S^{reg}_{i}) \ \ \forall \tilde{p}\in \tilde{S}^{reg}_{i} \ .
\]
Once the embeddings are known, the representations can be computed by using the techiques of moving frames. It is convenient to use complex coordinates, so we will consider $\R^{4}\subset \C^{4}$ and extend the $\R$-bilinear form of signature $(2,2)$ to the hermitian product on $\C^{4}$ given by
\[
	\langle z,w\rangle=z_{1}\bar{w}_{1}+z_{2}\bar{w}_{2}-z_{3}\bar{w}_{3}-z_{4}\bar{w}_{4} \ .
\]
Given an equivariant maximal conformal embedding $\tilde{\sigma}: \mathcal{D} \subset \h^{2} \rightarrow \AdS_{3}$, we define the frame field of $\tilde{\sigma}$ as the map $F:\mathcal{D} \rightarrow \SO_{0}(2,2)$ which associates to any point $z\in \h^{2}$ the matrix whose columns are given by the vectors $v_{1}(z)=\tilde{\sigma}_{z}(z)/\|\tilde{\sigma}_{z}(z)\|, v_{2}(z)=\tilde{\sigma}_{\bar{z}}(z)/\|\tilde{\sigma}_{\bar{z}}(z)\|, N(z)$ and $\tilde{\sigma}(z)$, where $N(z)$ is the unit normal vector. Taking the derivatives of the fundamental relations between these vectors, one can find that $F$ satisfies the following ODE, involving only the conformal factor $\varphi$ of the induced metric $I=2e^{2\varphi}|dz|^{2}$ and the quadratic differential $q$ 
\begin{equation}\label{eq:framefield}
	F^{-1}dF(z)=Vd\bar{z}+Udz=
		\begin{pmatrix} -\varphi_{\bar{z}} & 0 & e^{-\varphi}\bar{q} & 0 \\
				0 & \varphi_{\bar{z}} & 0 & e^{\varphi} \\ 
				0 & e^{-\varphi}\bar{q} & 0 & 0 \\
				e^{\varphi} & 0 & 0 & 0
		\end{pmatrix}d\bar{z}+ \begin{pmatrix} \varphi_{z} & 0 & 0 & e^{\varphi} \\
							0 & -\varphi_{z} & qe^{-\varphi} & 0 \\
							qe^{-\varphi} & 0 & 0 & 0 \\
							0 & e^{\varphi} & 0 & 0 
				       \end{pmatrix}dz \ .
\end{equation}
Now, we identify $(\tilde{S}^{reg}_{i}, \tilde{h}_{n})$ with a simply connected domain of $\h^{2}$ and we fix a lift $\tilde{p}_{n,i}$ of the base point. For every $\gamma \in \pi_{1}(S_{i}^{reg},p_{i})$, which we think of as a holomorphic automorphism of $\h^{2}$, the uniqueness of the solution to the initial value problem implies that $F_{n}=\gamma^{*}F_{n}$, where the frame field pulls back under $\gamma$ to
\[
 \gamma^{*}F_{n}(z)=\left\{\frac{\gamma'}{|\gamma'|}v_{1}(\gamma \cdot z), \frac{\overline{\gamma'}}{|\overline{\gamma'}|}v_{2}(\gamma \cdot z), N\circ \gamma, \tilde{\sigma}\circ \gamma \right\} \ .
\]
Let $D_{n,\gamma}$ be the diagonal matrix 
\[
	D_{n,\gamma}=\diag(\gamma'/|\gamma'|, \overline{\gamma'}/|\overline{\gamma'}|, 1 ,1) \ ,
\]
so that we can write 
\[
	\gamma^{*}F_{n}=H_{n,\gamma}D_{n,\gamma}
\]
where $H_{n,\gamma}$ is the matrix defined by 
\[
	H_{n,\gamma}: \{ v_{1}(\tilde{p}_{n,i}), v_{2}(\tilde{p}_{n,i}), N(\tilde{p}_{n,i}), \tilde{\sigma}_{n,i}(\tilde{p}_{n,i})\} \mapsto 
			\{ v_{1}(\gamma \cdot \tilde{p}_{n,i}), v_{2}(\gamma \cdot \tilde{p}_{n,i}), N(\gamma\cdot \tilde{p}_{n,i})), \tilde{\sigma}_{n,i}(\gamma \cdot \tilde{p}_{n,i})\} \ ,
\]
which can be obtained by solving Equation (\ref{eq:framefield}) with the identity as initial condition along the geodesic lift of $\gamma$ passing through $\tilde{p}_{n,i}$.
The matrix $\tilde{\rho}_{n,i}(\gamma)$ is then conjugated to $H_{n,\gamma}D_{n,\gamma}$, where the conjugating factor is the frame field of the immersion at the base point.

\begin{prop}\label{prop:holo} Let $\rho_{i,n}:\pi_{1}(S^{reg}_{i})\rightarrow \SO_{0}(2,2)$ be the sequence of representations associated to the equivariant embeddings $\tilde{\sigma}_{i,n}$ with respect to the base point $p_{i}$. Then $\rho_{n,i}$ converges to a representation $\rho_{\infty,i}:\pi_{1}(\Sigma^{reg}_{i})\rightarrow \SO_{0}(2,2)$.
\end{prop}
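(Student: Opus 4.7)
The plan is to combine the smooth convergence of the equivariant embeddings from Proposition \ref{prop:embeddings} with continuous dependence for the frame-field ODE (\ref{eq:framefield}). By that proposition, the induced metrics $I_{n}$ and second fundamental forms $II_{n}$ converge smoothly on compact subsets of $\widetilde{\Sigma^{reg}_{i}}$, and therefore so do the conformal factors $\varphi_{n}$ and, in a local holomorphic chart, the quadratic differentials $q_{n}$. Consequently the matrix-valued coefficients $U_{n}, V_{n}$ appearing in (\ref{eq:framefield}) converge smoothly on compact sets.

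Fix now $\gamma \in \pi_{1}(\Sigma^{reg}_{i}, p_{\infty,i})$. Via the marking identification, $\gamma$ also represents an element of $\pi_{1}(S^{reg}_{i}, p_{i})$. Because the base points $\tilde{p}_{n,i}$ converge and the hyperbolic metrics $h_{n}$ converge to $h_{\infty}$ uniformly on compact sets by point (a) of the convergence definition, the normalised uniformisations of $(\widetilde{S^{reg}_{i}}, \tilde{h}_{n})$ and $(\widetilde{\Sigma^{reg}_{i}}, h_{\infty})$ as domains of $\h^{2}$ identify $\gamma$ with M\"obius transformations $\gamma_{n}$ which converge to $\gamma_{\infty}$ in $\PSL(2,\R)$. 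In particular the diagonal matrices satisfy $D_{n,\gamma}\to D_{\infty,\gamma}$, and we may choose a smooth path from $\tilde{p}_{n,i}$ to $\gamma_{n}\cdot \tilde{p}_{n,i}$ contained in a fixed compact set $K \subset \h^{2}$ converging smoothly to an analogous path for the limit.

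Applying continuous dependence of ODE solutions on both the coefficients and the integration path to (\ref{eq:framefield}) with a common initial condition at the base point, the frame fields $F_{n}$ converge on $K$ to $F_{\infty}$, whence $H_{n,\gamma}\to H_{\infty,\gamma}$. The conjugating factor, namely the value of $F_{n}$ at $\tilde{p}_{n,i}$, converges by the same argument. We conclude $\rho_{n,i}(\gamma)\to \rho_{\infty,i}(\gamma)$. Since this holds for every $\gamma$, passing to the limit in the cocycle relations $\rho_{n,i}(\alpha\beta)=\rho_{n,i}(\alpha)\rho_{n,i}(\beta)$ shows that the limit map $\gamma \mapsto \rho_{\infty,i}(\gamma)$ is itself a representation $\rho_{\infty,i}:\pi_{1}(\Sigma^{reg}_{i})\to \SO_{0}(2,2)$, as required.

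The main subtlety lies in integrating (\ref{eq:framefield}) along $n$-dependent paths while keeping uniform control: the $h_{n}$-geodesic representative of $\gamma$ could in principle dip into a pinching tube where the coefficients of the ODE become singular in the ambient plumbing coordinates. The point is that every element of $\pi_{1}(\Sigma^{reg}_{i})$ admits a smooth representative contained in a compact set bounded away from the nodes, and, by the convergence of markings, bounded away from the pinched curves in $S^{reg}_{i}$ for $n$ large. Integration of the frame-field ODE can therefore be carried out on a single fixed compact set of $\h^{2}$ for all sufficiently large $n$, which is precisely the regime where the convergence of coefficients is uniform.
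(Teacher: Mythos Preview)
Your proof is correct and follows essentially the same approach as the paper: both argue that the coefficients of the frame-field ODE (\ref{eq:framefield}) converge uniformly on a compact set containing the lifted loops, invoke continuous dependence of the ODE solutions to obtain $H_{n,\gamma}\to H_{\infty,\gamma}$, and use the convergence of the Fuchsian uniformisations for $D_{n,\gamma}\to D_{\infty,\gamma}$. The only cosmetic differences are that the paper works with a fixed generating set rather than an arbitrary $\gamma$, and that you make the ``paths stay in a compact set away from the pinching tubes'' subtlety more explicit than the paper does.
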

\begin{proof} In the above setting, let $\{\alpha_{j}\}_{j=1, \dots, k}$ be a set of generators of $\pi_{1}(S^{reg}_{i})$ based at $p_{i}$. Let $\tilde{\alpha}_{j,n}$ be the lift of $\alpha_{j}$ starting at $\tilde{p}_{i,n}$ and ending at $\alpha_{j} \cdot \tilde{p}_{i,n}$. We denote with $\hat{\alpha}_{j,n}$ the geodesic segments homotopic to $\tilde{\alpha}_{j,n}$ relative to the end points. Since $h_{n}$ converges to $h_{\infty}$ on the regular parts, we know that $\tilde{\alpha}_{j,n}$ converges to $\tilde{\alpha}_{j,\infty}$ smoothly. Therefore, using the notations introduced in the above discussion, the sequence of matrices $H_{n, \alpha_{j}}$ converges when $n$ goes to infinity because, as the loop $\alpha_{j}$ is entirely contained in a compact set of $S^{reg}_{i}$, where we have uniform bounds on the conformal factors $u_{n}$ and where the background hyperbolic metrics $h_{n}$ and the quadratic differentials $q_{n}$ converge uniformly to $h_{\infty}$ and $q_{\infty}$, the system of ODE has uniformly bounded coefficients, hence the solutions to the initial value problem converge. \\
As for the diagonal matrices $D_{n,\alpha_{j}}$, they clearly converge because by assumption the Fuchsian groups realizing $(S^{reg}_{i}, h_{n})$ as quotients of domains in the hyperbolic plane with totally geodesic boundary are converging to the Fuchsian group $\Gamma_{\infty,i}$ so that $(\Sigma^{reg}_{i}, h_{\infty,i})=\h^{2}/\Gamma_{\infty,i}$. Since this holds for a set of generators of $\pi_{1}(S^{reg})$, the whole sequence of representation converges.
\end{proof}

We notice, moreover, that by taking the limit in the relation
\[
	\tilde{\sigma}_{i,n}(\alpha\cdot \tilde{p})=\rho_{n}(\alpha)\tilde{\sigma}_{i,n}(\tilde{p})   \ \ \ \ \  \forall \alpha \in\pi_{1}(S^{reg}_{i}) \ \ \forall \tilde{p}\in \tilde{S}^{reg}_{i}
\]
we obtain that $\sigma_{\infty,i}$ is equivariant with respect to $\rho_{\infty}$.

\begin{proof}[Proof of Theorem A] We can realise $M_{n}$ as quotient of a globally hyperbolic domain $\Omega_{n}$ of $\AdS_{3}$ by $\rho_{n}(\pi_{1}(S))$. In the same way, we can realise the (possibly disconnected) regular GHM anti-de Sitter manifold diffeomorphic to $\Sigma^{reg}\times \R$ corresponding to $(h_{\infty}, q_{\infty})$ as a quotient of a collection of domains $\Omega_{\infty, i}$ (one for each connected component of $\Sigma^{reg}$) by the groups $\rho_{\infty}(\pi_{1}(\Sigma^{reg},p_{i}))$, because by Proposition \ref{prop:holo} and the subsequent remark, $\rho_{\infty}$ is exactly the holonomy representation of the maximal embedding.  We recall that the domains $\Omega_{n}$ and $\Omega_{\infty, i}$ are completely determined by the boundary at infinity of the maximal surface by the property that a point $z\in \Omega_{n}$ (or $\Omega_{\infty,i}$) if and only if its projective dual plane is disjoint from the boundary at infinity of the maximal surface.
By Propositon \ref{prop:embeddings}, after fixing base points $\tilde{p}_{n,i}$ and $\tilde{p}_{i}$, we find that $(\Omega_{n}, \tilde{\sigma}_{n}(\tilde{p}_{i,n}))$ converges to $(\Omega_{\infty,i}, \tilde{\sigma}_{\infty,i}(\tilde{p}_{i}))$ because smooth convergence of the maximal embeddings on compact sets implies convergence of their boundary at infinity and, hence, of the domains of discontinuity by the previous remark. Therefore, each connected component of $M_{\infty}$ is a possible geometric limit of $M_{n}$, as claimed.
\end{proof}

\bibliographystyle{alpha}
\bibliographystyle{ieeetr}
\bibliography{bs-bibliography}

\bigskip

\noindent \footnotesize \textsc{DEPARTMENT OF MATHEMATICS, RICE UNIVERSITY}\\
\emph{E-mail address:}  \verb|andrea_tamburelli@libero.it|

\end{document}